\numberwithin{equation}{section}
\numberwithin{figure}{section}
\theoremstyle{plain}
\newtheorem{thm}{\protect\theoremname}[section]
\theoremstyle{plain}
\newtheorem{cor}[thm]{\protect\corollaryname}
\theoremstyle{plain}
\newtheorem{lem}[thm]{Lemma}
\theoremstyle{definition}
\theoremstyle{definition}
\theoremstyle{definition}
\newtheorem{qn}[thm]{Question}
\newtheorem{rem}[thm]{\protect\remarkname}
\newcommand\Z{\mathbb Z}
\def\gr{\operatorname{gr}}
\title{Topologically and rationally slice knots}
\author{Jennifer Hom}
\address{School of Mathematics, Georgia Institute of Technology}
\email{hom@math.gatech.edu}
\author{Sungkyung Kang}
\address{Center for Geometry and Physics, Institute for Basic Science}
\email{sungkyung38@icloud.com}
\author{JungHwan Park}
\address{Department of Mathematical Sciences, Korea Advanced Institute for Science and Technology}
\email{jungpark0817@kaist.ac.kr}
\subjclass[2020]{57K10, 57K18}
\keywords{Topologically slice, rationally slice, involutive knot Floer homology}
\providecommand{\corollaryname}{Corollary}
\providecommand{\definitionname}{Definition}
\providecommand{\remarkname}{Remark}
\providecommand{\theoremname}{Theorem}
\renewcommand{\setminus}{-}
\begin{document}

\begin{abstract}
A knot in $S^3$ is topologically slice if it bounds a locally flat disk in $B^4$. A knot in $S^3$ is rationally slice if it bounds a smooth disk in a rational homology ball. We prove that the smooth concordance group of topologically and rationally slice knots admits a $\mathbb{Z}^\infty$ subgroup. All previously known examples of knots that are both topologically and rationally slice were of order two. As a direct consequence, it follows that there are infinitely many topologically slice knots that are strongly rationally slice but not slice.
\end{abstract}
\maketitle

\section{Introduction}
The smooth concordance group of topologically slice knots contains a $\Z^\infty \oplus (\Z/2\Z)^\infty$ subgroup; the existence of a $\Z^\infty$ subgroup was first proved by Endo in 1995 using gauge theory \cite{Endo}, while the existence of a $(\Z/2\Z)^\infty$ subgroup was not proved until nearly 20 years later, using Heegaard Floer homology \cite{HeddenKimLiv}. Both of these proofs rely on Freedman's landmark result \cite{Freedman} that any knot with Alexander polynomial 1 is topologically slice.

Similarly, the smooth concordance group of rationally slice knots also contains a $\Z^\infty \oplus (\Z/2\Z)^\infty$ subgroup. However, our understanding of rationally slice knots proceeded in the opposite direction: the proof  of the $(\Z/2\Z)^\infty$ subgroup \cite{ChaMAMS} preceded the proof of the $\Z^\infty$ subgroup~\cite{HKPS} by 15 years. For their obstructive side, the former proof relies on J.\ Levine's algebraic concordance group \cite{JLevine} while the latter relies on involutive Heegaard Floer homology \cite{HendricksManolescu}. For their constructive side, they use the fact that the figure-eight knot is rationally slice and generalize its proof. This fact was observed by Cochran using work of Fintushel-Stern~\cite{FintushelStern}.

\vspace{.2cm}
What about knots that are both topologically and rationally slice? 
\begin{itemize}
    \item Rationally slice but not slice: The knots in \cite{ChaMAMS} are not topologically slice, as they are not even algebraically slice. Similarly, the generating set for the $\Z^\infty$ subgroup in \cite{HKPS} are not algebraically slice either. 
    \item Topologically slice but not slice: The knots representing the $(\Z/2\Z)^\infty$ subgroup of \cite{HeddenKimLiv} can be shown to be rationally slice by a straightforward modification of the proof of \cite[Theorem 4.16]{ChaMAMS}. The generating set for Endo's family~\cite{Endo}
 (the pretzel knots $P(-2k-1, 4k+1, 4k+3)$ for $k \geq 1$) are not rationally slice. There are more topologically slice knots in the literature and many of them are known to be not rationally slice (see e.g.~\cite{Hom:2015-1, Oz-St-Sz:2017-1, Kim-Park:2018-1, Feller-Park-Ray:2019-1, DHMT:2021-1}) and some of them are not known to be rationally slice or not (see e.g.~\cite{Hedden-Kirk:2012-1, CHH:2013-1, Cha-Kim:2021-1, Cha:2021-1, KKK:2022-1}).
\end{itemize}
\vspace{.2cm}

Note that all previously known examples of knots that are both topologically and rationally slice have \emph{order two} in the smooth concordance group. The goal of this paper is to show that the smooth concordance group of topologically and rationally slice knots does in fact contain a $\Z^\infty \oplus (\Z/2\Z)^\infty$ subgroup. For any knot $K$, let $\mathrm{Wh}^+(K)$ denote the positive Whitehead double of $K$, and $K_{p,-1}$ denote the $(p,-1)$-cable of $K$.

\begin{thm}\label{thm:main}
Denote by the right-handed trefoil and the figure-eight knot by $T$ and $E$, respectively. Then every knot in the family 
\[
\left\{ K_n:=(\mathrm{Wh}^+(T)\# E)_{2n+1,-1} \# -(\mathrm{Wh}^+(T))_{2n+1,-1} \# -E_{2n+1,-1} \mid n>0 \right\}
\]
is topologically and rationally slice, and has infinite order in the smooth concordance group. Furthermore, the given family admits a linearly independent infinite subset of the smooth concordance group.
\end{thm}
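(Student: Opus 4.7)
The proof naturally splits into two parts: establishing that each $K_n$ is topologically and rationally slice (a concordance-level manipulation), and establishing linear independence of $\{[K_n]\}_{n>0}$ in the smooth concordance group (which requires genuine Floer-theoretic input). For topological sliceness, since $\mathrm{Wh}^+(T)$ has Alexander polynomial $1$, it is topologically slice by Freedman, so $\mathrm{Wh}^+(T)\# E$ is topologically concordant to $E$. Cabling a locally flat concordance annulus inside its tubular neighborhood produces a topological concordance between cables, hence $(\mathrm{Wh}^+(T)\# E)_{2n+1,-1}$ is topologically concordant to $E_{2n+1,-1}$, and $(\mathrm{Wh}^+(T))_{2n+1,-1}$ is topologically concordant to the $(2n+1,-1)$-cable of the unknot, which is itself unknotted. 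Summing these shows that $K_n$ is topologically concordant to the unknot. The rational case proceeds by the same device applied to the rational slicing of $E$ (Cochran, building on Fintushel--Stern): $\mathrm{Wh}^+(T)\# E$ is rationally concordant to $\mathrm{Wh}^+(T)$, so $(\mathrm{Wh}^+(T)\# E)_{2n+1,-1}$ is rationally concordant to $(\mathrm{Wh}^+(T))_{2n+1,-1}$; separately, $E_{2n+1,-1}$ is rationally slice, since the cable pattern sits inside the tubular neighborhood of a smooth slice disk for $E$ in a rational homology ball. Combining gives that $K_n$ is rationally slice.

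For linear independence, the sliceness statements above show that each $K_n$ vanishes in both the topological and rational concordance groups, so the content of the theorem is that the classes $\{[K_n]\}$ are nonetheless independent smoothly. I would use involutive knot Floer homology and the local equivalence class of the $\iota_K$-complex (equivalently, the $\AZ$/$\bAZ$-type invariants recorded in the preamble), which is well-suited to detecting smooth non-sliceness among topologically-and-rationally slice knots. Concretely, the plan is: (i) identify the local equivalence classes of the $\iota_K$-complexes of $\mathrm{Wh}^+(T)$ and $E$, and hence of $\mathrm{Wh}^+(T)\# E$ via the connect sum formula; (ii) apply a $(2n+1,-1)$-cabling formula for $\iota_K$-complexes to each of the three summands defining $K_n$; and (iii) extract from the resulting local equivalence class of $K_n$ a sequence of numerical invariants $\phi_m$ that is additive under connect sum, vanishes on smoothly slice knots, and has a triangular form $\phi_m(K_n)=0$ for $m<n$ while $\phi_n(K_n)\neq 0$. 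A standard linear algebra argument then yields a linearly independent infinite subset and hence a $\Z^\infty$ subgroup.

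The main obstacle I anticipate is step (ii): controlling the $(p,-1)$-cabling formula at the level of $\iota_K$-complexes, and in particular distinguishing the complex of $(K_1\# K_2)_{p,-1}$ from that of $(K_1)_{p,-1}\# (K_2)_{p,-1}$. Negative cabling is already delicate at the $\mathrm{CFK}$ level, since the cable formulas involve nontrivial box-tensor contributions and truncations, and lifting the computation to the involutive setting requires carefully tracking how $\iota_K$ interacts with the cabling bimodule. Once this computation is carried out and the family $\{\phi_m\}$ is defined, combining the sliceness properties of $K_n$ with the triangular behavior of $\phi_m$ on the family proves Theorem~\ref{thm:main}.
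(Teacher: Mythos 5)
Your first paragraph (topological and rational sliceness) is essentially the paper's own argument and is fine. The linear independence plan, however, has a genuine gap in both of its load-bearing steps. First, step (ii) as stated is not available: there is no cabling formula at the level of ($\iota_K$-)local equivalence classes, because satellite operations do not descend to the local equivalence group -- this failure is exactly what makes the knots $K_n$ nontrivial in the first place. To say anything about $(\mathrm{Wh}^+(T)\# E)_{2n+1,-1}$ versus $(\mathrm{Wh}^+(T))_{2n+1,-1}\#E_{2n+1,-1}$ one must work with the full complexes including their acyclic summands (e.g.\ $CFK_{UV}(S^3,\mathrm{Wh}^+(T))\simeq CFK_{UV}(S^3,T)\oplus A^{\oplus 3}$), and a complete involutive computation of these negative cables is not something you can just cite; the paper deliberately avoids it. Instead it proves only a one-sided comparison: after replacing $\mathrm{Wh}^+(T)$ by $T\#E$ up to $\iota_K$-local equivalence, it shows via bordered Floer morphism-space computations (certain type-D morphisms into the square summands, and all non-identity degree-preserving endomorphisms of $\widehat{CFD}(S^3\setminus T)$, become nullhomotopic after tensoring with the cabling module) that the inclusion of the trefoil summand induces an \emph{almost $\iota_K$-local map}, giving the inequality $[T_{2n+1,-1}]\le[\mathrm{Wh}^+(T)_{2n+1,-1}]$ in the partially ordered group $\mathfrak{I}^U_K$, hence $[K_n]\le-[E_{2n+1,-1}]<0$.

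Second, and more fundamentally, step (iii) presumes a family of concordance \emph{homomorphisms} $\phi_m$ (additive, vanishing on slice knots, triangular on the family). No such invariants are known to exist here: since $CFK_{UV}(S^3,K_n)\simeq\mathbb{F}_2[U,V]\oplus(\text{acyclic})$, every additive invariant determined by the non-involutive local equivalence class ($\tau$, $\Upsilon$, $\varphi_j$, $\nu^+$, \dots) vanishes identically on all $K_n$, so the $\phi_m$ would have to come from involutive data -- and the numerical invariants extracted from involutive knot Floer homology are not additive. This is precisely why the paper (following the torsion paper it builds on) replaces the "triangular homomorphism" template by an order-theoretic argument: $\mathfrak{I}^U_K$ is totally ordered modulo $[E]$, one sandwiches $[K_n]$ against the standard complexes $C_m$ via the inequalities $[K_n]\le-[C_n]$ and $-[C_{N(n)}]<M\cdot[K_n]$, and a growth argument along a sparse subsequence yields linear independence of an infinite \emph{subset} (which is all the theorem claims, and all this method gives). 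As written, your proposal identifies the right tool (involutive knot Floer homology) but the mechanism you describe for converting it into linear independence does not exist, and the cabling computation you defer is the actual mathematical content that the paper resolves by a different, inequality-based route.
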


As mentioned in Theorem \ref{thm:main}, the knots $K_n$ are topologically and rationally slice. Indeed, up to topological concordance, we may ignore $\mathrm{Wh}^+(T)$, resulting in $E_{2n+1,-1} \# -E_{2n+1,-1}$, which is clearly slice. Similarly, up to rational concordance, we may ignore $E$, which also results in a slice knot. Our family of knots exploits the fact that cabling does not induce a homomorphism on the concordance group; see~\cite{HeddenPinzon} for further discussion of satellite operators and the concordance group.

We may naturally interpret our main theorem as follows. Let $\mathcal{T}$ be the smooth concordance group of topologically slice knots and $\mathcal{T}_\mathbb{Q}$ be the smooth rational concordance group of topologically slice knots. Since two concordant knots are rationally concordant, we have the following natural surjective homomorphism:
$$ \psi_\mathcal{T}\colon \mathcal{T} \to \mathcal{T}_\mathbb{Q}.$$
Note that the main theorem of \cite{HeddenKimLiv} implies that $\ker \psi_\mathcal{T}$ contains infinitely many order two elements. As a direct corollary of our main theorem, we have the following:
\begin{cor}\label{cor:main}
The group $\ker \psi_\mathcal{T}$ contains a subgroup isomorphic to $\mathbb{Z}^\infty$. 
\end{cor}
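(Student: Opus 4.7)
The plan is to read \cref{cor:main} off \cref{thm:main} essentially verbatim. By \cref{thm:main}, every knot $K_n$ in the stated family is both topologically slice and rationally slice. Topological sliceness places $[K_n] \in \mathcal{T}$, while rational sliceness means that the image of $K_n$ in the smooth rational concordance group vanishes, so $\psi_\mathcal{T}([K_n]) = 0$. Hence $[K_n] \in \ker \psi_\mathcal{T}$ for every $n > 0$.

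Next, \cref{thm:main} furnishes an infinite subset $\{K_{n_i}\}_{i \geq 1}$ of this family that is linearly independent in the smooth concordance group $\mathcal{C}$. Since $\mathcal{T}$ sits inside $\mathcal{C}$ as a subgroup (topologically slice knots are closed under connected sum and inverses), the same subset is linearly independent in $\mathcal{T}$. The subgroup of $\mathcal{T}$ generated by $\{[K_{n_i}]\}$ is therefore isomorphic to $\mathbb{Z}^\infty$, and since each generator already lies in the subgroup $\ker \psi_\mathcal{T}$, the whole $\mathbb{Z}^\infty$ sits inside $\ker \psi_\mathcal{T}$.

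There is no genuine obstacle at this stage: the entire content of \cref{cor:main} is packaged into \cref{thm:main}, and the derivation is purely formal. The only bookkeeping is to observe that $\psi_\mathcal{T}$ really is the map whose kernel measures being \emph{rationally slice within the topologically slice category}, so that rational sliceness of each $K_n$ translates directly into membership in $\ker \psi_\mathcal{T}$.
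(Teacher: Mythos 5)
Your proof is correct and matches the paper's reasoning: the paper treats \cref{cor:main} as an immediate consequence of \cref{thm:main}, exactly as you do (topological sliceness gives membership in $\mathcal{T}$, rational sliceness gives vanishing under $\psi_\mathcal{T}$, and linear independence of an infinite subfamily in $\mathcal{C}$ yields the $\mathbb{Z}^\infty$ subgroup of $\ker\psi_\mathcal{T}$).
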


Furthermore, the knots in \Cref{thm:main} can be used to prove the existence of a knot which is topologically slice and strongly rationally slice but not slice. Strongly rationally slice knots are knots $K$ which bound a smoothly embedded disk $\Delta$ in a rational homology ball $X$ such that the inclusion induces an isomorphism
\[
H_1(S^3 \setminus K;\mathbb{Z}) \xrightarrow{\cong} H_1(X \setminus \Delta; \mathbb{Z})/\text{torsion}.
\]
The question of distinguishing strongly rationally slice knots from slice knots is very subtle; the first example of such a knot was found in \cite{kang2022torsion}. By directly applying the proof of \cite[Theorem 1.4]{kang2022torsion} to the knots in \Cref{thm:main}, we immediately get the following corollary.

\begin{cor}
   Let $\{K_n\}$ be the knots in \Cref{thm:main}.
    Then each $(K_n)_{2,1}$ is topologically and strongly rationally slice, and has infinite order in the smooth concordance group.
\end{cor}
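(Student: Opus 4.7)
The plan is to verify each of the three claims --- topological sliceness, strong rational sliceness, and infinite order in smooth concordance --- for $(K_n)_{2,1}$ by combining the corresponding properties of $K_n$ from \Cref{thm:main} with the cabling construction from the proof of \cite[Theorem 1.4]{kang2022torsion}.

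Topological sliceness is essentially immediate. Since $K_n$ is topologically slice by \Cref{thm:main}, it is topologically concordant to the unknot $U$. Cabling preserves topological concordance (one pulls the cable pattern through a tubular neighborhood of the concordance annulus), so $(K_n)_{2,1}$ is topologically concordant to $U_{2,1}=U$, and hence topologically slice.

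For strong rational sliceness I would start from a smooth rational slice disk $\Delta \subset V$ for $K_n$, which exists by \Cref{thm:main}. Following the proof of \cite[Theorem 1.4]{kang2022torsion}, I would then construct a smooth disk $\Delta'$ for $(K_n)_{2,1}$ in a rational homology ball $V'$ obtained by taking the $(2,1)$-cable of $\Delta$ inside a tubular neighborhood and capping off with a standard slice disk for $T(2,1)$. The delicate point is that the inclusion $S^3 \setminus (K_n)_{2,1} \hookrightarrow V' \setminus \Delta'$ must induce an isomorphism on $H_1(-;\mathbb{Z})/\text{torsion}$; this is precisely where the choice of cabling parameter $(2,1)$ is engineered to kill torsion contributions, and the verification proceeds by a homological computation identical to the one carried out in \cite{kang2022torsion}.

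Finally, for infinite order in the smooth concordance group I would invoke the involutive Heegaard Floer concordance invariants underlying \Cref{thm:main}. The proof of \cite[Theorem 1.4]{kang2022torsion} shows how the involutive invariants of a knot $K$ control those of the $(2,1)$-cable $K_{2,1}$, so that nontriviality of the relevant obstruction for $K$ transfers to $K_{2,1}$. I expect this to be the main obstacle: one must verify that the specific involutive Floer invariants distinguishing $K_n$ in \Cref{thm:main} persist through the cabling formulas of \cite{kang2022torsion}. Because that framework is tailored to effect exactly this transfer for topologically and rationally slice knots, and because the Floer-theoretic obstructions produced in the proof of \Cref{thm:main} are of the form handled there, the claim should follow by direct application of the cited argument.
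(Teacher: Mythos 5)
Your proposal takes essentially the same route as the paper: the paper's entire proof of this corollary is to apply the proof of \cite[Theorem 1.4]{kang2022torsion} to the knots $K_n$ of \Cref{thm:main}, which is precisely the three-part cabling argument you outline (topological sliceness since $(K_n)_{2,1}$ is topologically concordant to the unknotted $T(2,1)$, strong rational sliceness via the homological verification of the $H_1$-condition carried out in that proof, and infinite order via the involutive Floer transfer established there). Your sketch is consistent with that argument, so it matches the paper's proof in both substance and level of detail.
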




Recall that there exists a natural surjective homomorphism $\psi\colon \mathcal{C} \to \mathcal{C}_\mathbb{Q}$ defined from the smooth concordance group to the smooth rational concordance group. As its counterpart in the topological category, we can also consider the map $$\psi^\text{top} \colon \mathcal{C}^\text{top} \to \mathcal{C}_\mathbb{Q}^\text{top}$$ defined from the topological concordance group to the topological rational concordance group. Even though we know that there are subgroups $\mathbb{Z}^\infty \oplus (\mathbb{Z}/2\mathbb{Z})^\infty$ in $\ker \psi$ and $(\mathbb{Z}/2\mathbb{Z})^\infty$ in $\ker \psi^\text{top}$ \cite{ChaMAMS, HKPS}, the existence of an infinite order element in $\ker \psi^\text{top}$ is still not known. Hence, we ask:

\begin{qn} Does there exist an infinite order element in $\ker \psi^\text{top}$?
\end{qn}

Lastly, we make a remark that the positive Whitehead double of the figure-eight knot is also a good candidate that is likely to have infinite order in $\ker \psi_\mathcal{T}$. To the best of the authors' knowledge it is not even known if this knot is slice or not. Here, we ask:

\begin{qn} Is the subgroup generated by $\mathrm{Wh}^+(E)$ in $\ker \psi_\mathcal{T}$ isomorphich to $\mathbb{Z}$? More specifically, is $\mathrm{Wh}^+(E)$ slice?\end{qn}

\subsection{Proof outline} For the reader's convenience, we now provide a brief overview of the proof strategy. The proof relies on bordered and involutive knot Floer homology, using several tricks and techniques along the way to ease the calculations as much as possible.
\begin{enumerate}
    \item Up to $\iota_K$-local equivalence, we can replace $\mathrm{Wh}^+(T)$ by $T \# E$ (Lemma \ref{lem:iotaKD}).
    \item\label{it:iT} There is a (non-involutive) inclusion map 
\[
i_T : \widehat{CFD}(S^3 \setminus T) \rightarrow \widehat{CFD}(S^3 \setminus (T\#E)).
\]
Using Zhan's bordered Floer homology calculator \cite{bfh_python}, we determine that certain type-D morphisms are nullhomotopic  (Lemmas \ref{lem:tensorlem} and \ref{lem:trefoillem}), allowing us to conclude that $i_T$ induces an almost $\iota_K$-local map from the almost $\iota_K$-complex of the $(2n+1,-1)$-cable of $T$ to that of $T\#E$ (or equivalently that of $\mathrm{Wh}^+(T)$, as in Equation \ref{eq:TleqD}).
    \item We combine the result from \eqref{it:iT} with results from \cite{kang2022torsion} to compare our knots $K_n$ to certain standard complexes $C_n$, with respect to the partial order on the horizontal almost $\iota_K$-local equivalence group (described in Section \ref{sec:horzalmost}). Using properties of how the $C_n$ interact with the partial order, we arrive at the desired linear independence result.
\end{enumerate}

\subsection*{Acknowledgements} The authors would like to thank Gary Guth for his help on using Zhan's bordered Floer homology calculator. JH was partially supported by NSF grant DMS-2104144 and a Simons Fellowship. SK was supported by the Institute for Basic Science (IBS-R003-D1). JP was partially supported by Samsung Science and Technology Foundation (SSTF-BA2102-02) and the POSCO TJ Park Science Fellowship.

\section{The horizontal almost $\iota_K$-local equivalence group}\label{sec:horzalmost}
We assume that the reader is familiar with involutive knot Floer homology \cite{HendricksManolescu} and the $\iota_K$-local equivalence group \cite{HMZ}. See \cite[Section 2]{HKPS} for an expository overview of these concepts. In this paper, we focus on the horizontal almost $\iota_K$-local equivalence group, which has the advantage that, modulo the image of the figure eight knot, it is totally ordered. What follows is a short summary of \cite[Section 2]{kang2022torsion}.

A \emph{horizontal almost $\iota_K$-complex} is a pair $(C,\iota)$ of a bigraded complex $C$ of finitely generated free modules over $\mathbb{F}_2[U]$ and a chain homotopy equivalence $\iota\colon\widehat{C}\rightarrow \widehat{C}$, where $\widehat{C}$ is the hat-flavored truncation of $C$, such that the following conditions are satisfied. 
\begin{itemize}
    \item The formal variable $U$ has bigrading $(-2,0)$.
    \item $U^{-1}C\simeq \mathbb{F}_2[U,U^{-1}]$
    \item $\iota$ is skew-graded, i.e. maps a $(a,b)$-bigraded element to a $(b,a)$-bigraded one.
    \item $\Phi\iota\Phi\iota \sim \iota\Phi\iota\Phi$.
    \item $\iota^2 \sim 1+\Phi\iota\Phi\iota$.
    \item There exists a chain map $f\colon C\rightarrow C$ whose hat-flavored truncation is homotopic to $\iota\Phi\iota$.
\end{itemize}
Here, $\Phi$ denotes the formal derivative of the differential of $C$ with respect to the formal variable $U$, which is a chain map which is well-defined up to homotopy.

A degree-preserving chain map $f\colon C\rightarrow D$ between horizontal almost $\iota_K$-complexes $(C,\iota_C)$ and $(D,\iota_D)$ is \emph{almost $\iota_K$-local map} if it satisfies the following conditions.
\begin{itemize}
    \item $f$ is \emph{local}, i.e.\ the localized map $U^{-1}f\colon U^{-1}C\rightarrow U^{-1}D$ is a homotopy equivalence.
    \item $\iota_C \widehat{f} \sim \widehat{f} \iota_D$, where $\widehat{f}$ is the hat-flavored truncation of $f$.
\end{itemize}
Furthermore, if there exist almost $\iota_K$-local maps $f\colon C\rightarrow D$ and $g:D\rightarrow C$, then we say that $(C,\iota_C)$ and $(D,\iota_D)$ are \emph{almost $\iota_K$-locally equivalent}. 

We denote the almost $\iota_K$-local equivalence classes of horizontal almost $\iota_K$-complexes by $\mathfrak{I}^U _K$. We endow this set with a tensor product operation $\otimes$ as $(C,\iota_C)\otimes (D,\iota_D)=(C\otimes D,\iota_{C\otimes D})$, where $\iota_{C\otimes D}$ is defined as $\iota_{C\otimes D} = \iota_C \otimes \iota_D + \Phi\iota_C \otimes \iota_D \Phi$. Although this operation is nonsymmetric, the group $\mathfrak{I}^U_K$ is indeed abelian~\cite[Proposition 2.6]{kang2022torsion}.

The involutive knot Floer homology package associates to a knot $K$ a well-defined element in $\mathfrak{I}^U_K$, given by the chain complex $CFK^-(S^3,K)$, together with the $\iota_K$-action on $\widehat{CFK}(S^3,K)$. For simplicity, we denote this element as $[K]$. It is clear that this defines a group homomorphism 
\[
\mathcal{C}\rightarrow \mathfrak{I}^U_K,
\]
where $\mathcal{C}$ is the smooth concordance group. The image of the unknot is denoted by $0 \in \mathfrak{I}^U_K$.

Notice that the existence of almost $\iota_K$-local map gives a partial order on $\mathfrak{I}^U_K$. In particular, we say that $C \le D$ if there exists an almost $\iota_K$-local map $C\rightarrow D$. It turns out that this is actually a total order $\mathfrak{I}^U_K$, modulo the figure-eight complex $[E]$, i.e.\ the 2-torsion element induced by the involutive knot Floer homology of the figure-eight knot $E$. In other words, two elements $C,D\in \mathfrak{I}^U_K$ are incomparable if and only if $C=D+[E]$~\cite[Theorem 2.11]{kang2022torsion}.  We will exploit this ordering to prove the main theorem.

\section{Lemmas from involutive bordered Floer homology}

We assume that the reader is familiar with standard materials in bordered Heegaard Floer homology, in particular the materials in \cite{lipshitz2018bordered}.

For simplicity, from now on, we will denote the $\infty$-framed solid torus, as a bordered manifold, as $T_\infty$. Given a pattern $P\subset T_\infty$, we can define the minus-flavored type-A module $CFA^-(T_\infty,P)$, which is a type-A structure over the torus algebra $\mathcal{A}(T^2)$ with the coefficient ring $\mathbb{F}_2[U]$. We denote its truncation by $U=0$ as $\widehat{CFA}(T_\infty,P)$. Furthermore, we can also remove a tubular neighborhood of $P$ from $T_\infty$ and endow the newly created torus boundary with the $0$-framing; this defines a bordered manifold with two torus boundaries, whose type-DA bordered Floer homology is denoted as $\widehat{CFDA}(T_\infty \setminus P)$. Note that we have 
\[
\widehat{CFD}(S^3 \setminus P(K))\simeq \widehat{CFDA}(T_\infty \setminus P)\boxtimes \widehat{CFD}(S^3 \setminus K)
\]
via standard gluing formulae.

We will denote the longitudinal knot inside $T_\infty$ by $\nu$. In particular, we have a gluing formula \cite[Theorem 11.29]{lipshitz2018bordered}:
\[
CFK^-(S^3,K) \simeq CFA^-(T_\infty,\nu) \boxtimes \widehat{CFD}(S^3 \setminus K),
\]
where $CFK^-$ denotes the truncation of $CFK_{UV}(S^3,K)$ by $V=0$. Note that we can also apply the gluing formula to get an identification
\[
CFA^-(T_\infty ,P)\simeq CFA^-(T_\infty,\nu)\boxtimes \widehat{CFDA}(T_\infty \setminus P).
\]
Given a bordered manifold $M$ with one torus boundary, its type-D bordered involution takes the form
\[
\iota_M : \widehat{CFDA}(\mathbf{AZ})\boxtimes \widehat{CFD}(M)\rightarrow \widehat{CFD}(M),
\]
where $\mathbf{AZ}$ denotes the Auroux-Zarev piece, defined in \cite[Section 4]{lipshitz2011heegaard}. Although $\iota_M$ is not well-defined up to homotopy due to the lack of naturality in bordered Floer homology, it is still a homotopy equivalence, and we will not need its uniquenss anyway. We denote the set of homotopy classes of all possible type-D bordered involutions of $M$ by $\mathbf{Inv}_D(M)$.

The following lemma follows from the proof of \cite[Theorem 4.5]{kang2022torsion}. Its proof is straightforward from the discussions preceding the proof of \cite[Theorem 1.2]{kang2022involutive}, so we omit it for the sake of simplicity.

\begin{lem} \label{lem:KangParkLem2}
    Given two knots $K_1,K_2$, let $f\colon \widehat{CFD}(S^3 \setminus K_1) \rightarrow \widehat{CFD}(S^3 \setminus K_2)$ be a type-D morphism. Given a pattern $P \subset T_\infty$, consider the induced type-D morphism $P(f)\colon \widehat{CFD}(S^3 \setminus P(K_1)) \rightarrow \widehat{CFD}(S^3 \setminus P(K_2))$, defined as
    \[
    \begin{split}
        \widehat{CFD}(S^3 \setminus P(K_1)) &\simeq \widehat{CFDA}(T_\infty \setminus P) \boxtimes \widehat{CFD}(S^3 \setminus K_1) \\
        &\xrightarrow{\mathrm{id}_{\widehat{CFDA}(T_\infty \setminus P)}\boxtimes f} \widehat{CFDA}(T_\infty \setminus P) \boxtimes \widehat{CFD}(S^3 \setminus K_2) \simeq \widehat{CFD}(S^3 \setminus P(K_2)).
    \end{split}
    \]
    Then we have $$\iota_{S^3 \setminus P(K_2)} \circ \left(\mathrm{id}_{\widehat{CFDA}(\mathbf{AZ})}\boxtimes  P(f)\right) \circ \iota^{-1}_{S^3 \setminus P(K_1)} \sim P\left(\iota_{S^3 \setminus K_2} \circ \left(\mathrm{id}_{\widehat{CFDA}(\mathbf{AZ})}\boxtimes  f\right) \circ \iota^{-1}_{S^3 \setminus K_1}\right)$$ for suitable choices of bordered involutions $\iota_{S^3 \setminus P(K_i)} \in \mathbf{Inv}_D(S^3 \setminus P(K_i))$ and $\iota_{S^3 \setminus K_i}\in \mathbf{Inv}_D(S^3 \setminus K_i)$ for each $i\in \{1,2\}$.
\end{lem}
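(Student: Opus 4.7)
The plan is to deduce the lemma from a gluing compatibility formula for bordered involutions. The key ingredient, discussed in \cite{kang2022involutive} in the paragraphs preceding Theorem 1.2, is that for the decomposition $S^3 \setminus P(K) = (T_\infty \setminus P) \cup_{T^2} (S^3 \setminus K)$, one can assemble a bordered D-involution of the glued manifold from a bordered DA-involution $\iota^{DA}_{T_\infty \setminus P}$ of the pattern complement together with a bordered D-involution $\iota_{S^3 \setminus K}$ of the knot complement. Schematically, after making compatible choices of representatives in $\mathbf{Inv}_D$, one has
\[
\iota_{S^3 \setminus P(K)} \sim \bigl(\mathrm{id}_{\widehat{CFDA}(T_\infty \setminus P)} \boxtimes \iota_{S^3 \setminus K}\bigr) \circ \bigl(\iota^{DA}_{T_\infty \setminus P} \boxtimes \mathrm{id}_{\widehat{CFD}(S^3 \setminus K)}\bigr),
\]
where $\iota^{DA}_{T_\infty \setminus P}$ transports a $\widehat{CFDA}(\mathbf{AZ})$ factor across the pattern complement from the outer $\infty$-framed (D-side) boundary to the inner $0$-framed (A-side) boundary. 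The fact that bordered involutions are only well-defined up to homotopy (and up to choice in $\mathbf{Inv}_D$) is exactly the flexibility the statement of the lemma is designed to accommodate.

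Given this decomposition, the proof of the lemma becomes a formal manipulation of box tensor products. Writing $\alpha = \iota^{DA}_{T_\infty \setminus P}$ and $\beta_i = \iota_{S^3 \setminus K_i}$, substituting the decomposition into both $\iota_{S^3 \setminus P(K_2)}$ and $\iota^{-1}_{S^3 \setminus P(K_1)}$, and using $P(f) = \mathrm{id}_{\widehat{CFDA}(T_\infty \setminus P)} \boxtimes f$, the left-hand side of the claimed homotopy takes the form
\[
\bigl(\mathrm{id} \boxtimes \beta_2\bigr) \circ \bigl(\alpha \boxtimes \mathrm{id}\bigr) \circ \bigl(\mathrm{id} \boxtimes \mathrm{id} \boxtimes f\bigr) \circ \bigl(\alpha^{-1} \boxtimes \mathrm{id}\bigr) \circ \bigl(\mathrm{id} \boxtimes \beta_1^{-1}\bigr).
\]
The middle three factors operate on disjoint tensor slots: $\alpha^{\pm 1}$ acts only on the $\widehat{CFDA}(\mathbf{AZ})$ and $\widehat{CFDA}(T_\infty \setminus P)$ factors, while $f$ acts only on the knot-complement factor. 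They therefore commute, and the $\alpha$--$\alpha^{-1}$ pair cancels. What remains is exactly $\mathrm{id}_{\widehat{CFDA}(T_\infty \setminus P)}$ box-tensored with $\beta_2 \circ (\mathrm{id}_{\widehat{CFDA}(\mathbf{AZ})} \boxtimes f) \circ \beta_1^{-1}$, which is by definition $P$ applied to the $K$-side conjugation, namely the right-hand side of the lemma.

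The main obstacle is the gluing decomposition itself. One needs to know that $\iota^{DA}_{T_\infty \setminus P}$ exists as a homotopy equivalence of DA-bimodules (in an appropriate sense compatible with its role as a ``bordered DA-involution''), and that its box-product with a bordered D-involution of the knot complement yields a valid bordered D-involution of the glued manifold. These facts are precisely what is set up in the foundational discussion of \cite{kang2022involutive}; once they are in hand, the remaining argument is the mechanical box-tensor bookkeeping sketched above, which is why the authors describe the proof as straightforward and omit it.
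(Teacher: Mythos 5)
Your proposal is correct and follows essentially the route the paper intends: the paper omits the proof precisely because it reduces to the box-tensor bookkeeping you describe once one invokes the decomposition of the bordered involution of the glued manifold $S^3 \setminus P(K)$ as the pairing of a D-involution of $S^3 \setminus K$ with a DA-level involution of $T_\infty \setminus P$, which is exactly the input from the discussion preceding the proof of Theorem 1.2 of \cite{kang2022involutive} that the paper cites. The only (harmless) slip is in your boundary labels: in the splitting $\widehat{CFD}(S^3 \setminus P(K)) \simeq \widehat{CFDA}(T_\infty \setminus P) \boxtimes \widehat{CFD}(S^3 \setminus K)$ the outer $\infty$-framed boundary of $T_\infty \setminus P$ is the A-side and the newly created $0$-framed boundary is the D-side, so the $\widehat{CFDA}(\mathbf{AZ})$ factor is transported from the inner (D-side) boundary to the outer (A-side) boundary, the reverse of what you wrote.
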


\section{Proof of the main theorem}

We start with several explicit computations of bordered Floer homology modules. Given any knot $K$ in $S^3$, the type-D module $\widehat{CFD}(S^3 \setminus K)$ can be easily computed from the knot Floer chain complex $CFK_{UV}(S^3,K)$. For example, for the right-handed trefoil $T$, the type-D module $\widehat{CFD}(S^3 \setminus T)$ for the 0-framed complement of $T$ can be described as follows:
\[
\xymatrix{
s_1 \ar[d]_{\rho_1} & t_1 \ar[l]_{\rho_2} & s_2 \ar[l]_{\rho_3} \ar[d]^{\rho_1} \\
t_4 & & t_2 \\
& t_3 \ar[ul]_{\rho_{23}} & s_3 \ar[l]^{\rho_3} \ar[u]_{\rho_{123}}
}
\]
Similarly, we can also describe $\widehat{CFD}(S^3 \setminus E)$ for the 0-framed complement of the figure-eight knot $E$, as shown below. We will call its summand generated by $a,b,c,e,y_1,y_2,y_3,y_4$ as the square-module, and denote it by $S$.
\[
\xymatrix{
b \ar[d]_{\rho_1} & y_1 \ar[l]_{\rho_2} & a \ar[l]_{\rho_3} \ar[d]^{\rho_1} & & \\
y_2 & & y_4 & \oplus & z \ar@(ur,dr)^{\rho_{12}} \\
e \ar[u]^{\rho_{123}} & y_3\ar[l]^{\rho_2} & c\ar[l]^{\rho_3} \ar[u]_{\rho_{123}} & &
}
\]

We will also need the (hat-flavored) type-A module $\widehat{CFA}(T_\infty,P_{2n+1,-1})$ of the $(2n+1,-1)$-cabling pattern in the $\infty$-framed solid torus $T_\infty$, which is shown below. Note that $CFA^-(T_\infty,P_{2n+1,-1})$ was computed originally in \cite[Lemma 8.3]{Oz-St-Sz:2017-1}; we simply truncated their computation by taking $U=0$. 
\[
\xymatrix{
& a_{2n+1} \ar[ld]_{\rho_2} && a_{2n}\ar[ll]_{\rho_2,\rho_1} && \cdots\ar[ll]_{\rho_2,\rho_1} && a_2\ar[ll]_{\rho_2,\rho_1} && a_1\ar[ll]_{\rho_2,\rho_1}\\
w \ar[rd]_{\rho_3} & && && && && \\
& b_{2n+1} && b_{2n} && \cdots && b_2 && b_1
}
\]

Based on these computations, we will now prove two computational lemmas, which play a crucial role in the proof of \Cref{thm:main}.

\begin{lem} \label{lem:tensorlem}
Any type-D morphism from $\widehat{CFD}(S^3 \setminus T)$ to the square-summand $S$ becomes nullhomotopic after box-tensored with the identity morphism of $\widehat{CFA}(T_\infty,P_{2n+1,-1})$ for any positive integer $n$.
\end{lem}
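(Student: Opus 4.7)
The plan is to reduce the statement to a finite computation that then propagates uniformly in $n$. First, I would identify the finite-dimensional $\mathbb{F}_2$-vector space $\mathrm{Mor}(\widehat{CFD}(S^3 \setminus T), S)$ of type-D morphisms up to chain homotopy. Since $\widehat{CFD}(S^3 \setminus T)$ has seven generators of prescribed idempotent type ($s_1,s_2,s_3 \in \iota_0$ and $t_1,\ldots,t_4 \in \iota_1$) and $S$ has four generators of each type, the morphism space is finite-dimensional and can be enumerated either by direct case analysis on the possible images of the $s_i$ and $t_j$ in $\mathcal{A}(T^2)\otimes S$, or more efficiently with Zhan's bordered Floer homology calculator as elsewhere in this section. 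Because nullhomotopy of $f$ immediately implies nullhomotopy of $\mathrm{id}\boxtimes f$, it suffices to verify the statement on a set of representatives of homotopy classes.

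Next, for each representative $f$, I would compute the induced map
\[
\mathrm{id} \boxtimes f \colon \widehat{CFA}(T_\infty, P_{2n+1,-1}) \boxtimes \widehat{CFD}(S^3 \setminus T) \longrightarrow \widehat{CFA}(T_\infty, P_{2n+1,-1}) \boxtimes S
\]
explicitly as a morphism of type-A structures. The module $\widehat{CFA}(T_\infty, P_{2n+1,-1})$ has a uniform-in-$n$ shape: a single seed generator $w$ feeding into two parallel chains $a_{2n+1},\ldots,a_1$ and $b_{2n+1},\ldots,b_1$ via alternating actions with $(\rho_2,\rho_1)$. Consequently $\mathrm{id}\boxtimes f$ admits an explicit formula whose shape is the same for every $n$, differing only in the length of the two chains.

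Finally, I would exhibit a chain homotopy from $\mathrm{id}\boxtimes f$ to zero, uniformly in $n$. For each representative from the first step one writes down a candidate homotopy on the seed portion (on $w$ and the first few $a_i,b_i$) and then propagates it along the two chains by induction on the chain index. The main obstacle will be this induction step: one has to verify that the $A_\infty$-relations reduce, at each chain level, to a finite set of identities satisfied by the specific algebra-element coefficients of $f^1$, and that no obstruction accumulates as the chain grows. Uniformity in $n$ should follow structurally from the fact that appending the tail generators $a_{2n+3},a_{2n+2}$ and $b_{2n+3},b_{2n+2}$ only introduces actions that see the previously handled portion of the chain; a heuristic reason to expect this is that $S$ lacks the $\rho_{23}$-arrow $t_3\to t_4$ present in $\widehat{CFD}(S^3\setminus T)$, so the image of $f$ cannot close up any cycle that the chains of $A_n = \widehat{CFA}(T_\infty,P_{2n+1,-1})$ would propagate indefinitely. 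Once this extension step is verified for each candidate $f$, the lemma follows for all positive $n$ at once.
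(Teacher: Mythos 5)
Your reduction steps are fine and match the paper's setup: the morphism space $H_*\mathrm{Mor}(\widehat{CFD}(S^3\setminus T),S)$ is finite-dimensional and computable with Zhan's calculator, box-tensoring with the identity is linear in $f$ and preserves nullhomotopy, so it suffices to check a set of generators of the homotopy classes. But the proposal stops exactly where the lemma's content begins. You never produce the generators (the paper finds six explicit morphisms $f_1,f_2,f_3,g_1,g_2,g_3$), and you never exhibit the homotopies: your third step is a plan to build a homotopy ``on the seed portion'' and propagate it ``by induction on the chain index,'' and you yourself flag the induction step as the main obstacle without resolving it. The only reason offered that no obstruction accumulates is a heuristic about $S$ lacking a $\rho_{23}$-arrow, which is not an argument and in fact is not the mechanism that makes the statement true. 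As written, this is a proof outline with the decisive verification missing, so there is a genuine gap.

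For comparison, the paper's proof needs no induction at all and gets uniformity in $n$ for free: for each of the six generators, one lists the finitely many simple tensors on which $\mathrm{id}\boxtimes f$ is nonzero and observes that each such input or output element sits inside a two-step acyclic direct summand of the box-tensored complex, e.g.\ $(\partial\colon w\otimes s_2\to b_{2n+1}\otimes t_1)$, $(\partial\colon a_{2n}\otimes y_3\to b_{2n+1}\otimes y_2)$, or, for the map sending $s_1\mapsto\rho_1 y_4$, the summands $(\partial\colon a_i\otimes t_1\to a_{i+1}\otimes t_4)$; cancelling these acyclic summands kills the map up to homotopy, and the summands are described uniformly in $n$. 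Note that the last family of cancellations has nothing to do with the presence or absence of $\rho_{23}$-arrows in $S$, which is another sign that your heuristic would not carry the induction you propose. If you want to salvage your approach, replace the inductive construction of homotopies by this acyclic-summand cancellation, after actually computing the generating morphisms.
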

\begin{proof}

Zhan's bordered Floer homology calculator \cite{bfh_python} tells us that the space $H_\ast \mathrm{Mor}(\widehat{CFD}(S^3 \setminus T),S)$ of homotopy classes of type-D morphisms from $\widehat{CFD}(S^3 \setminus T)$ to $S$ is six-dimensional, generated over $\mathbb{F}_2$ by $f_1,f_2,f_3,g_1,g_2,g_3$, which are defined as follows.

\[
\begin{split}
    f_1 &: s_2 \mapsto e,\,t_2 \mapsto \rho_{23} y_2, \\
    f_2 &: s_1 \mapsto e,\,s_2 \mapsto c,\,t_2 \mapsto \rho_{23}y_4,\,t_4 \mapsto \rho_{23}y_2, \\
    f_3 &: s_2 \mapsto b,\,s_3 \mapsto e,\,t_2 \mapsto \rho_{23}y_2, \\
    g_1 &: s_3 \mapsto \rho_3 y_1, \\
    g_2 &: s_3 \mapsto \rho_3 y_3, \\
    g_3 &: s_1 \mapsto \rho_1 y_4.
\end{split}
\]
Thus, to prove the lemma, we only have to show that these six morphisms become nullhomotopic after box-tensored with $\mathrm{id}_{\widehat{CFA}(T_\infty,P_{2n+1,-1})}$. For simplicity, we will use the following convention: a chain complex $C$ over $\mathbb{F}_2$ has an acyclic summand $(\partial \colon a\rightarrow b)$ if $a$ and $b$ generate a direct summand of $C$ and $\partial a = b$.

We start with the map $f_1$. The only simple tensors (of basis elements) on which $\mathrm{id}_{\widehat{CFA}(T_\infty,P_{2n+1,-1})}\boxtimes f_1$ takes nontrivial values are $w\otimes s_2$ and $a_{2n+1}\otimes t_2$. In particular, we have 
\[
\left(\mathrm{id}_{\widehat{CFA}(T_\infty,P_{2n+1,-1})}\boxtimes f_1\right)(w\otimes s_2) = w\otimes e\quad\text{and}\quad \left(\mathrm{id}_{\widehat{CFA}(T_\infty,P_{2n+1,-1})}\boxtimes f_1\right)(a_{2n+1}\otimes t_2) = b_{2n+1}\otimes y_2.
\]
However, $w\otimes s_2$ is contained in the acyclic summand $(\partial \colon w\otimes s_2 \rightarrow b_{2n+1}\otimes t_1)$ and $b_{2n+1}\otimes y_2$ is contained in the acyclic summand $(\partial \colon a_{2n}\otimes y_3 \rightarrow b_{2n+1}\otimes y_2)$. Thus $\mathrm{id}_{\widehat{CFA}(T_\infty,P_{2n+1,-1})}\boxtimes f_1$ is nullhomotopic.

For $f_2$, the map $\mathrm{id}_{\widehat{CFA}(T_\infty,P_{2n+1,-1})}\boxtimes f_2$ takes nontrivial values only on $w\otimes s_1$, $a_{2n+1} \otimes t_1$ (which are both mapped to $w\otimes e$), and $w\otimes s_2$. But $w\otimes e$ and $w\otimes s_2$ are contained in the acyclic summands $(\partial \colon a_{2n+1}\otimes y_3\rightarrow w\otimes e)$ and $(\partial \colon w\otimes s_2 \rightarrow b_{2n+1}\otimes t_1)$, respectively. Thus $\mathrm{id}_{\widehat{CFA}(T_\infty,P_{2n+1,-1})}\boxtimes f_2$ is nullhomotopic.

Similarly, the map $\mathrm{id}_{\widehat{CFA}(T_\infty,P_{2n+1,-1})}\boxtimes f_3$ takes nontrivial values on $w\otimes s_2$ and $w\otimes s_3$, which are contained in the acylic summands $(\partial \colon w\otimes s_2\rightarrow b_{2n+1}\otimes t_1)$ and $(\partial \colon w\otimes s_3\rightarrow b_{2n+1}\otimes t_3)$. Hence $\mathrm{id}_{\widehat{CFA}(T_\infty,P_{2n+1,-1})}\boxtimes f_3$ is also nullhomotopic.

For the maps $g_1$ and $g_2$, the only simple tensor on which $\mathrm{id}_{\widehat{CFA}(T_\infty,P_{2n+1,-1})}\boxtimes g_i$ takes a nontrivial value for $i=1,2$ is $w\otimes s_3$, on which $\mathrm{id}_{\widehat{CFA}(T_\infty,P_{2n+1,-1})}\boxtimes g_i$ takes the value $b_{2n+1}\otimes y_1$ for $i=1$ and $b_{2n+1}\otimes y_3$ for $i=2$. But the former is contained in the acyclic summand $(\partial \colon w\otimes a\rightarrow b_{2n+1}\otimes y_1)$ and the latter is contained in the acyclic summand $(\partial \colon w\otimes c\rightarrow b_{2n+1}\otimes y_3)$. Hence $\mathrm{id}_{\widehat{CFA}(T_\infty,P_{2n+1,-1})}\boxtimes g_i$ is nullhomotopic for $i=1,2$.

It remains to show that $\mathrm{id}_{\widehat{CFA}(T_\infty,P_{2n+1,-1})}\boxtimes g_3$ is nullhomotopic. The complete list of its nontrivial values is given as follows:
\[
a_1 \otimes t_1 \mapsto a_2 \otimes y_4, \cdots ,\, a_{2n} \otimes t_1 \mapsto a_{2n+1}\otimes y_4.
\]
But for each $i\in \{1,\cdots,2n\}$, we have that  $a_i \otimes t_1$ is contained in the acyclic summand $(\partial \colon a_i \otimes t_1 \rightarrow a_{i+1}\otimes t_4)$, and thus $\mathrm{id}_{\widehat{CFA}(T_\infty,P_{2n+1,-1})}\boxtimes g_3$ is also nullhomotopic.
\end{proof}

Recall that bordered Heegaard Floer homology comes with gradings by nonabelian groups; given a bordered 3-manifold $Y$ with boundary $\mathcal{Z}$, the associated type-D module $\widehat{CFD}(Y)$ is graded by a transitive $G(\mathcal{Z})$-set. Hence the notion of degree-preserving endomorphisms of $\widehat{CFD}(Y)$ is well-defined. See \cite[Chapter 10]{lipshitz2018bordered} for a detailed description of gradings in bordered Floer homology.

\begin{lem}\label{lem:trefoillem}
    Let $f$ be a degree-preserving endomorphism of $\widehat{CFD}(S^3 \setminus T)$. Then $f$ is either nullhomotopic or homotopic to the identity morphism.
\end{lem}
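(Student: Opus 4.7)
The plan is to mimic the strategy of \Cref{lem:tensorlem}: invoke Zhan's bordered Floer homology calculator \cite{bfh_python} to enumerate homotopy classes of type-D endomorphisms of $\widehat{CFD}(S^3\setminus T)$, pick out those preserving the nonabelian $G(T^2)$-grading on the morphism space (as in \cite[Chapter 10]{lipshitz2018bordered}), and verify that the resulting $\mathbb{F}_2$-vector space is one-dimensional, generated by the class of the identity. The dichotomy in the lemma is then immediate, since a one-dimensional $\mathbb{F}_2$-space contains only $0$ and $\mathrm{id}$.

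Concretely, I would feed the displayed seven-generator module $\widehat{CFD}(S^3\setminus T)$ into the calculator and read off the graded space $H_\ast\mathrm{Mor}(\widehat{CFD}(S^3\setminus T),\widehat{CFD}(S^3\setminus T))$, then isolate the degree-preserving summand and confirm that the identity exhausts it up to homotopy. The identity is of course present in that summand, so the content is really that it is the \emph{only} class there.

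As a cross-check that no degree-preserving class is being missed, I would also tensor with $CFA^-(T_\infty,\nu)$: any degree-preserving $f$ induces a degree-preserving endomorphism $\bar f$ of the length-one staircase $CFK^-(S^3,T)$, and such an $\bar f$ is manifestly either nullhomotopic or homotopic to the identity, according to whether it kills the top generator or not. This independently predicts that at most two degree-preserving homotopy classes can exist on the $\widehat{CFD}$ side, and matches the expected calculator output.

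The main obstacle is essentially bookkeeping: setting up the grading conventions correctly and confirming that the calculator returns the degree-preserving classes in a usable form (one must be careful with the grading refinement chosen when the calculator reports morphisms). Since $\widehat{CFD}(S^3\setminus T)$ has only seven generators and the torus algebra is finite-dimensional, the computation is well within the calculator's reach, and the written proof should reduce to a short enumeration along the lines of \Cref{lem:tensorlem}, possibly followed by the one-line $CFA^-(T_\infty,\nu)$ pairing argument to identify the unique nontrivial class as $[\mathrm{id}]$.
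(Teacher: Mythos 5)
Your main argument---enumerating $H_\ast\mathrm{End}(\widehat{CFD}(S^3\setminus T))$ with Zhan's calculator and checking that the identity spans the degree-preserving part---is exactly the paper's proof, which finds this space to be six-dimensional, spanned by the identity and five explicit morphisms $h_1,\dots,h_5$, none of which is degree-preserving. One caveat: your ``cross-check'' by pairing with $CFA^-(T_\infty,\nu)$ does not in fact bound the number of degree-preserving classes on the $\widehat{CFD}$ side, since box-tensoring need not be injective on homotopy classes of morphisms (indeed \Cref{lem:tensorlem} shows nontrivial classes can die under such tensoring), but this remark is inessential to your argument.
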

\begin{proof}
    One can use Zhan's bordered Floer homology calculator to show that the space $H_\ast \mathrm{End}(\widehat{CFD}(S^3 \setminus T))$ is six-dimensional, generated by the identity morphism and the morphisms $h_1,\cdots,h_5$ described below.
    \[
    \begin{split}
        h_1 &: s_1 \mapsto \rho_3 t_4,\, s_2 \mapsto s_3,\, t_1 \mapsto t_3,\, t_2 \mapsto \rho_{23}t_2, \\
        h_2 &: s_2 \mapsto s_1,\, s_3 \mapsto \rho_1 t_3,\, t_2 \mapsto \rho_{23}t_4, \\
        h_3 &: s_1 \mapsto \rho_1 t_2, \\
        h_4 &: s_2 \mapsto \rho_1 t_2, \\
        h_5 &: s_3 \mapsto \rho_3 t_1.
    \end{split}
    \]
    It is straightforward to see that $h_1,\cdots,h_5$ are not degree-preserving. Therefore the space of degree-preserving type-D endomorphisms of $\widehat{CFD}(S^3 \setminus T)$ is one-dimensional and generated by the identity morphism.
\end{proof}

We need one more lemma regarding the $\iota_K$-local equivalence class of the involutive knot Floer homology of $\mathrm{Wh}^+(T)$, which we will denote as $D$ for simplicity.

\begin{lem}\label{lem:iotaKD}
The $\iota_K$-complex of $D$ is $\iota_K$-locally equivalent to the knot Floer complex of the twist knot $5_2$, or equivalently, $T \# E$.
\end{lem}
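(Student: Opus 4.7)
The plan is to compute the $\iota_K$-complex of $D=\mathrm{Wh}^+(T)$ directly via bordered involutive Heegaard Floer homology and to exhibit an explicit local equivalence with the $\iota_K$-complex of $T\#E$ (which, in turn, is easily identified with that of the twist knot $5_2$). First, I would compute the bigraded chain complex
\[
CFK^-(\mathrm{Wh}^+(T)) \simeq CFA^-(T_\infty,\mathrm{Wh}^+)\boxtimes \widehat{CFD}(S^3\setminus T),
\]
using the explicit type-D module for the $0$-framed trefoil complement displayed earlier in the section together with the standard type-A module for the positive Whitehead pattern. After cancelling all acyclic summands, this should reduce to a small explicit model suitable for direct comparison.

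Second, I would compute the $\iota_K$-action on both complexes. On the $\mathrm{Wh}^+(T)$ side I would apply the bordered involutive machinery of Lemma~\ref{lem:KangParkLem2}: fix a bordered involution $\iota_{S^3\setminus T}$, transport it through the $\mathbf{AZ}$-piece satellited by the Whitehead pattern, and read off the resulting involution on the reduced model. On the $T\#E$ side the involution is determined by the standard connected-sum formula, expressed in terms of the known $\iota_K$-actions on the trefoil staircase and the figure-eight complex. I would then exhibit explicit degree-preserving $\mathbb{F}_2[U]$-module chain maps between the two reduced complexes in both directions, verify locality by checking the $U^{-1}$-localisations, and verify $\iota_K$-equivariance up to homotopy by a finite chain-level calculation.

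The main obstacle will be tracking the $\iota_K$-action for $\mathrm{Wh}^+(T)$: while the underlying chain complex is accessible and can be cross-checked with Zhan's calculator \cite{bfh_python}, transporting the involution through an $\mathbf{AZ}$-piece satellited with the nontrivial Whitehead pattern demands careful bookkeeping, and a sequence of auxiliary homotopies is likely to appear during the reduction step. Once the reduced model and its involution are in hand, however, the comparison with $CFK^-(T\#E)$ becomes a routine algebraic check on a small chain complex, from which the local equivalence should be immediate.
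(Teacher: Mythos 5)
Your first step (recovering the underlying complex $CFK^-(\mathrm{Wh}^+(T))$ by box-tensoring the Whitehead pattern with $\widehat{CFD}(S^3\setminus T)$ and cancelling) is fine, though heavier than necessary: the paper simply quotes \cite[Lemma A.1]{HeddenKimLiv}, which already gives $CFK_{UV}(S^3,D)\simeq CFK_{UV}(S^3,T)\oplus A^{\oplus 3}$ with $A$ a unit box. The genuine gap is in your second step, the computation of $\iota_K$ for $\mathrm{Wh}^+(T)$ by ``transporting'' a bordered involution $\iota_{S^3\setminus T}$ through the $\mathbf{AZ}$-piece satellited by the Whitehead pattern. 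A bordered involution of the satellite knot complement is not the same thing as the knot involution $\iota_K$ on $CFK^-$ of the satellite: passing from one to the other requires a satellite/gluing formula for $\iota_K$ itself, which needs involutive input on the pattern side (this is exactly the role of \cite[Lemma 3.1]{kang2022torsion} in the hat flavor for the cabling patterns used later in the paper), and no such formula for the Whitehead pattern is supplied by anything you cite. In particular, \Cref{lem:KangParkLem2} does not do this job: it only says that conjugation by bordered involutions commutes, up to homotopy and suitable choices, with the morphism $P(f)$ induced by a type-D morphism $f$; it is a statement about morphisms between two knot complements, not a computation of $\iota_K$ for a satellite knot. As written, the central object of the lemma --- the involution on $CFK$ of $D$ --- is therefore not actually determined by your plan.

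The paper's route avoids bordered involutive machinery entirely at this point: with the complex known from \cite{HeddenKimLiv}, it pins down $\iota_K$ modulo $(U,V)$ by the structural argument of \cite[Proposition 8.1]{HendricksManolescu} (skew-grading, $\iota_K^2\simeq$ Sarkar map, etc.), then uses bidegree constraints and explicit homotopies to remove all higher $U$- or $V$-order terms on the three box summands, so that $\iota_K$ splits as a direct sum of its action on the $T\#E$-like summand and on an acyclic piece. Local equivalence to $T\#E$ (equivalently $5_2$) then follows at once, again from \cite[Proposition 8.1]{HendricksManolescu}, with no need to construct explicit two-way local maps. If you want to salvage your approach, you would either have to prove an involutive satellite formula valid for the Whitehead pattern (a substantial result not available in the cited literature), or replace your step two by exactly this kind of algebraic determination of $\iota_K$ from the abstract properties of an $\iota_K$-complex, at which point your bordered computation of the underlying complex becomes an unnecessary detour.
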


\begin{proof}
It follows from the proof of \cite[Lemma A.1]{HeddenKimLiv} that we have 
\[
    CFK_{UV}(S^3,D) \simeq CFK_{UV}(S^3,T)\oplus A^{\oplus 3},
\]
where $A$ is a unit box summand, as shown below.
\[
\xymatrix{
\beta \ar[dd]_{V} & & \alpha \ar[ll]_{U}\ar[dd]^{V} \\
& &  \\
\delta & & \gamma \ar[ll]^{U}
}
\]
We follow the general strategy of the proof of \cite[Proposition 8.1]{HendricksManolescu}, noting that in this case, our knot is not thin. In particular, the proof uniquely determines $\iota_K \mod (U,V)$ up to chain homotopy, which we summarize in the following table. Note that we have three copies of $A$; we denote their generators as $\alpha_i,\beta_i,\gamma_i,\delta_i$ for $i=0,1,2$.
\begin{center}
\begin{tabular}{*{12}{@{\hspace{10pt}}c}}
\hline
& && $\partial$ && $\gr_U$ && $\gr_V$ && $A$ && $\iota_K \mod (U,V)$ \\
\hline
& $\rho$ && $0$ && $0$ && $-2$ && $1$ && $\tau$ \\ 
& $\sigma$ && $U\rho+ V \tau$ && $-1$ && $-1$ && $0$ && $\sigma+\delta_0$ \\ 
& $\tau$ && $0$ && $-2$ && $0$ && $-1$ && $\rho$ \\ 
& $\alpha_0$ && $U\beta_0+ V \gamma_0$ && $-1$ && $-1$ && $0$ && $\alpha_0+\sigma$ \\ 
& $\beta_0$ && $V \delta_0$ && $0$ && $-2$ && $1$ && $\gamma_0 +\tau$ \\ 
& $\gamma_0$ && $U \delta_0$ && $-2$ && $0$ && $-1$ && $\beta_0+\rho$ \\ 
& $\delta_0$ && $0$ && $-1$ && $-1$ && $0$ && $\delta_0$ \\ 
& $\alpha_1$ && $U\beta_i+ V \gamma_1$ && $-2$ && $-2$ && $0$ && $\alpha_2$ \\ 
& $\beta_1$ && $V \delta_1$ && $-1$ && $-3$ && $1$ && $\gamma_2$ \\ 
& $\gamma_1$ && $U \delta_1$ && $-3$ && $-1$ && $-1$ && $\beta_2$ \\ 
& $\delta_1$ && $0$ && $-2$ && $-2$ && $0$ && $\delta_2$ \\ 
& $\alpha_2$ && $U\beta_i+ V \gamma_2$ && $-2$ && $-2$ && $0$ && $\alpha_1+\delta_1$ \\ 
& $\beta_2$ && $V \delta_2$ && $-1$ && $-3$ && $1$ && $\gamma_1$ \\ 
& $\gamma_2$ && $U \delta_2$ && $-3$ && $-1$ && $-1$ && $\beta_1$ \\ 
& $\delta_2$ && $0$ && $-2$ && $-2$ && $0$ && $\delta_1$ \\ 
\hline
\end{tabular}
\end{center}

We are left with the possibility that $\iota_K$ may contain terms that are multiples of $U$ or $V$. Due to the bidegree reasons, such terms can arise only for the values of $\iota_K$ at $\alpha_i,\beta_i,\gamma_i,\delta_i$ for $i=1,2$.
Since $\partial \alpha_1 = U\beta_1 + V\gamma_1$, we may homotope $\iota_K$, which changes the values of $\iota_K$ only at $\alpha_1,\beta_1,\gamma_1$, to eliminate all terms arising in $\iota_K(\alpha_1)$ which have nontrivial $U$ or $V$ exponents in their coefficients. Then, since $\partial \beta_1 = V\delta_1$, we can further homotope $\iota_K$, which changes the values of $\iota_K$ only at $\beta_1,\gamma_1,\delta_1$, to eliminate all terms arising in $\iota_K(\beta_1)$ involving nontrivial $U$-exponents. Thus we may assume that 
\[
\iota_K(\alpha_1) = \alpha_2 \quad\text{and}\quad \iota_K(\beta_1) = \gamma_2 + Vc
\]
for some element $c$. But then $c$ should lie in bidegree $(-3,1)$, and there is no such an element in the given complex, so we should have 
\[
\iota_K(\beta_1) = \gamma_2.
\]
Then it follows from 
\[
V\gamma_2 + U\iota_K(\gamma_1) = \iota_K(\partial \alpha_1) = \partial \iota_K(\alpha_1) = U\beta_2 + V\gamma_2
\]
that $\iota_K(\gamma_1) = \beta_2$, and then we deduce from
\[
U\iota_K(\delta_1) = \iota_K(\partial \beta_1) = \partial \iota_K(\beta_1) = U\delta_2
\]
that $\iota_K(\delta_1) = \delta_2$. Thus, to summarize, we have homotoped $\iota_K$ in the square summand generated by $\{\alpha_1,\beta_1,\gamma_1,\delta_1\}$ so that it acts by
\[
\alpha_1\mapsto \alpha_2,\quad \beta_1\mapsto \gamma_2,\quad \gamma_1\mapsto \beta_2,\quad \delta_1 \mapsto \delta_2.
\]
Similarly, we can also homotope $\iota_K$ in the square summand generated by $\{\alpha_2,\beta_2,\gamma_2,\delta_2\}$ so that it acts by 
\[
\alpha_2 \mapsto \alpha_1+\delta_1,\quad \beta_2 \mapsto \gamma_1,\quad \gamma_2\mapsto \beta_1,\quad \delta_2\mapsto \delta_1.
\]
But then the action of $\iota_K$ splits as the direct sum of its action on the summand generated by $\rho,\sigma,\tau,\alpha_0,\beta_0,\gamma_0,\delta_0$ and the summand generated by $\alpha_i,\beta_i,\gamma_i,\delta_i$, $i=1,2$. Since the latter summand is acyclic after localizing by $(U,V)^{-1}$, we see that the given $\iota_K$-complex is $\iota_K$-locally equivalent to its summand generated by $\rho,\sigma,\tau,\alpha_0,\beta_0,\gamma_0,\delta_0$. However \cite[Proposition 8.1]{HendricksManolescu} implies that this summand is $\iota_K$-locally equivalent to the involutive knot Floer complex of $T\# E$. The lemma follows.
\end{proof}

We are now ready to prove \Cref{thm:main}; its proof will be divided in two parts. In the first part, we will prove that the knots 
\[
\left\{K_n = (D\# E)_{2n+1,-1} \# -D_{2n+1,-1} \# -E_{2n+1,-1} \mid n>0 \right\}
\]
have infinite order in the smooth concordance group. Then, in the second part, we will prove that infinitely many $K_n$ form a linearly independent family of the smooth concordance group.

\begin{proof}[Proof of \Cref{thm:main}, first part]
It follows from \Cref{lem:iotaKD} and \cite[Theorem 1.2]{kang2022involutive} that 
    \[
    [(D\# E)_{2n+1,-1}] = [(T\# E\# E)_{2n+1,-1}] = [T_{2n+1,-1}] \in \mathfrak{I}^U_K.
    \]
Since $CFK_{UV}(S^3,D)$ has $CFK_{UV}(S^3,T)$ as a direct summand, the type-D module $\widehat{CFD}(S^3 \setminus D)$ also has $\widehat{CFD}(S^3 \setminus T)$ as a direct summand; here, all knot complements are endowed with the 0-framing. Take the inclusion map
    \[
    i_T \colon \widehat{CFD}(S^3 \setminus T) \rightarrow \widehat{CFD}(S^3 \setminus D).
    \]
    By box-tensoring with the identity morphism on the type-DA module $CFDA(T_\infty \setminus P_{2n+1,-1})$, we get a type-D morphism
    \[
    (i_T)_{2n+1,-1}\colon \widehat{CFD}(S^3 \setminus T_{2n+1,-1}) \rightarrow \widehat{CFD}(S^3 \setminus D_{2n+1,-1}).
    \]
    Box-tensoring this morphism further with the identity morphism on $CFA^-(T_\infty,\nu)$ gives a $(n_z,n_w)$-bidegree-preserving chain map 
    \[
    \mathrm{Minus}\left((i_T)_{2n+1,-1}\right)\colon CFK^-(S^3,T_{2n+1,-1})\rightarrow CFK^-(S^3,D_{2n+1,-1}),
    \]
    whose truncation by $U=0$ is given by $\mathrm{Hat}((i_T)_{2n+1,-1})$. Then, by \cite[Lemma 3.1]{kang2022torsion} and \Cref{lem:KangParkLem2}, it follows that
    \[
    \iota_{D_{2n+1,-1}} \circ \mathrm{Hat}\left((i_T)_{2n+1,-1}\right) \circ \iota^{-1}_{T_{2n+1,-1}} \sim \mathrm{id}_{\widehat{CFA}(T_\infty,P_{2n+1,-1})}\boxtimes \left(\iota_{S^3 \setminus D}\circ \left(\mathrm{id}_{\widehat{CFDA}(\mathbf{AZ})}\boxtimes  i_T\right)\circ \iota^{-1}_{S^3 \setminus T}\right)
    \]
    for some bordered involutions $\iota_{S^3 \setminus D}\in \mathbf{Inv}_D(S^3 \setminus D)$ and $\iota_{S^3 \setminus T}\simeq \mathbf{Inv}_D(S^3 \setminus T)$, since we have 
    \[
    \widehat{CFA}(T_\infty,P_{2n+1,-1}) \simeq \widehat{CFA}(T_\infty,\nu)\boxtimes \widehat{CFDA}(T_\infty \setminus P_{2n+1,-1}).
    \]
    Hence, if we consider the type-D morphism
    \[
    F = i_T + \iota_{S^3 \setminus D}\circ \left(\mathrm{id}_{\widehat{CFDA}(\mathbf{AZ})}\boxtimes  i_T\right)\circ \iota^{-1}_{S^3 \setminus T},
    \]
    then we have 
    \[
    \mathrm{Hat}((i_T)_{2n+1,-1}) + \iota_{D_{2n+1,-1}} \circ \mathrm{Hat}((i_T)_{2n+1,-1}) \circ \iota^{-1}_{T_{2n+1,-1}} \sim \mathrm{id}_{\widehat{CFA}(T_\infty,P_{2n+1,-1})} \boxtimes F.
    \]

    \textbf{Claim: $\mathrm{id}_{\widehat{CFA}(T_\infty,P_{2n+1,-1})} \boxtimes F$ is nullhomotopic.} Assuming the claim, we immediately get
    \[
    \mathrm{Hat}((i_T)_{2n+1,-1}) \circ \iota_{T_{2n+1,-1}} \sim \iota_{D_{2n+1,-1}} \circ \mathrm{Hat}((i_T)_{2n+1,-1}).
    \]
    To prove the claim, we recall that $CFK_{UV}(S^3,D)\simeq CFK_{UV}(S^3,T)\oplus A^{\oplus 3}$; note that the three $A$ summands lie in different bigradings, but we do not have to care about this issue here. By \cite[Theorem 11.26]{lipshitz2018bordered}, this corresponds to the splitting 
    \[
    \widehat{CFD}(S^3 \setminus D)\simeq \widehat{CFD}(S^3 \setminus T) \oplus S^{\oplus 3},
    \]
    where the inclusion map for the $\widehat{CFD}(S^3 \setminus T)$ summand is the map $i_T$. Choose any one of the three $S$-summands, and take its projection map
    \[
    \mathrm{pr}_S:\widehat{CFD}(S^3 \setminus D) \rightarrow S.
    \]
    Then it follows from \Cref{lem:tensorlem} that $\mathrm{id}_{\widehat{CFA}(T_\infty,P_{2n+1,-1})}\boxtimes \left(\mathrm{pr}_S \circ F\right)$ is nullhomotopic. 

    It remains to show that $\mathrm{id}_{\widehat{CFA}(T_\infty,P_{2n+1,-1})} \boxtimes (\mathrm{pr}_T \circ F)$ is nullhomotopic, were $\mathrm{pr}_T$ denotes the projection map to the trefoil summand, i.e.
    \[
    \mathrm{pr}_T:\widehat{CFD}(S^3 \setminus D)\rightarrow \widehat{CFD}(S^3 \setminus T).
    \]
    We will actually prove a stronger assertion that $\mathrm{pr}_T\circ F$ is nullhomotopic. To see this, we start with the fact that $F$ is degree-preserving, which implies that $\mathrm{pr}_T \circ F$ is a degree-preserving type-D endomorphism of $\widehat{CFD}(S^3 \setminus T)$. Then it follows from \Cref{lem:trefoillem} that $\mathrm{pr}_T \circ F$ is either nullhomotopic or homotopic to the identity morphism. However, if it were homotopic to the identity, then we should have 
    \[
    \begin{split}
        \mathrm{id}_{\widehat{CFK}(S^3,T)} &= \mathrm{id}_{\widehat{CFA}(T_\infty,\nu)} \boxtimes \left(\mathrm{pr}_T \circ F\right) \\
        &\sim \mathrm{pr}_{\widehat{CFK}(S^3,T)} \circ \left(\mathrm{id}_{\widehat{CFA}(T_\infty,\nu)} \boxtimes F\right) \\
        &\sim \mathrm{pr}_{\widehat{CFK}(S^3,T)} \circ \left(\mathrm{Hat}((i_T)_{2n+1,-1}) + \iota_{D_{2n+1,-1}} \circ \mathrm{Hat}\left((i_T)_{2n+1,-1}\right) \circ \iota^{-1}_{T_{2n+1,-1}}\right),
    \end{split}
    \]
    where $\mathrm{pr}_{\widehat{CFK}(S^3,T)}$ denotes the projection map from $\widehat{CFK}(S^3 ,D)$ to its direct summand $\widehat{CFK}(S^3,T)$. However, the right hand side of the above equation is the truncation (by $U=V=0$) of a chain endomorphism of $CFK_{UV}(S^3,T)$ which becomes nullhomotopic after localizing by $(U,V)^{-1}$. This is impossible since the identity map of $\widehat{CFK}(S^3,T)$ clearly does not satisfy this property. Thus $\mathrm{pr}_T \circ F$ should be nullhomotopic and the claim is proved.
    
    Now that we have
    \[
    \mathrm{Hat}((i_T)_{2n+1,-1}) \circ \iota_{T_{2n+1,-1}} \sim \iota_{D_{2n+1,-1}} \circ \mathrm{Hat}((i_T)_{2n+1,-1}),
    \]
    it follows that $\mathrm{Minus}((i_T)_{2n+1,-1})$ is an almost $\iota_K$-local map by \cite[Lemma 3.3]{kang2022torsion}, i.e.\ we have an inequality
    \begin{equation}\label{eq:TleqD}
    [T_{2n+1,-1}] \le [D_{2n+1,-1}].
    \end{equation}
    Since $[(D\# E)_{2n+1,-1}] = [T_{2n+1,-1}] \in \mathfrak{I}^U_K$ and $K_n = (D\# E)_{2n+1,-1} \# -D_{2n+1,-1}\# -E_{2n+1,-1}$, we get
    \[
    [K_n] = [T_{2n+1,-1}] - [D_{2n+1,-1}] - [E_{2n+1,-1}] \le -[E_{2n+1,-1}].
    \]
    Furthermore, it is shown in the proof of \cite[Theorem 4.5]{kang2022torsion} that $0 < [E_{2n+1,-1}]$ and $[E_{2n+1,-1}]$ has infinite order in $\mathfrak{I}^U_K$. Hence, we deduce that 
    \[
    [K_n] \le -[E_{2n+1,-1}] < 0,
    \]
    and $[K_n]$ has infinite order in $\mathfrak{I}^U_K$. Therefore $K_n$ has infinite order in the smooth concordance group.
\end{proof}

To prove the linear independence part of \Cref{thm:main}, we recall more facts about $\mathfrak{I}^U_K$ from \cite{kang2022torsion}. Recall from \cite[Section 4]{kang2022torsion} that, for each $n\ge 2$, the horizontal almost $\iota_K$-complex $C_n$ is generated by elements $a_n,b_n,c_n,d_n,x_n$, where $a_n$ and $x_n$ have bidegree $(0,0)$. The differential is given by
\[
\partial a_n = U^n b_n,\,\partial c_n = U^n d_n, \, \partial b_n = \partial d_n = \partial x_n = 0
\]
and the involution on its hat-flavored truncation $\widehat{C}_n$ is given by
\[
\iota_{C_n}(a_n)=a_n+x_n,\,\iota_{C_n}(b_n) = c_n,\,\iota_{C_n}\text{ fixes } d_n,x_n.
\]

These complexes satisfy the following inequalities in $\mathfrak{I}^U_K$ (for detailed discussion see the proofs of \cite[Lemma 4.6 and Theorem 4.7]{kang2022torsion}). First of all, for any $n\ge 2$, we have 
\begin{equation}\label{eq:CnvsCnplus1}
0<[C_n] \quad \text{and} \quad M \cdot [C_n] < [C_{n+1}] \quad \text{for any integer $M$}.    
\end{equation}
Furthermore, for any $n\ge 2$, we have 
\begin{equation}\label{eq:CnvsEn}
    [C_n] \le [E_{2n+1,-1}].
\end{equation}
Lastly, if a knot $J$ satisfies $CFK_{UV}(S^3,J) \simeq \mathbb{F}_2[U,V]\oplus A$, for some acyclic summand $A$ (which is the case for each knot $K_n$), then there is a positive integer $N\geq 2$ such that
\begin{equation}\label{eq:upperboundonJ}
    M\cdot [J] < [C_N] \quad \text{for any integer $M$}.
\end{equation}
Now, we are ready to prove the second part of the main theorem.

\begin{proof}[Proof of \Cref{thm:main}, second part]
    It follows from the first part of the proof of \Cref{thm:main} and \eqref{eq:CnvsEn} that for each $n\ge 2$, we have 
    \[
    [K_n] \le -[E_{2n+1,-1}] \le -[C_n].
    \]
    Furthermore, by \eqref{eq:upperboundonJ} for each positive integer $n$, there is a positive integer $N(n)$ such that 
    \[
    -[C_{N(n)}] < M\cdot [K_n] \quad \text{for any intger } M.
    \]
    Hence, if we choose an increasing sequence of integers $\{ s_n \}$ 
such that $s_1 = 2$ and $s_{n+1} \ge N(s_n)$ for each~$n\ge 1$, then by combining the above two inequalities with \eqref{eq:CnvsCnplus1} we have that 
    \[
    [K_{s_{n+1}}] \le -[C_{s_{n+1}}] \le -[C_{N(s_n)}] < M\cdot [K_{s_n}] \quad \text{for any intger } M.
    \]

    Suppose that the knots $K_{s_n}$ are linearly dependent in the smooth concordance group. Then there exists a finite sequence $D_1, D_2, \ldots, D_n$ of nonzero integers such that
    \[
    D_1 \cdot [K_{s_1}] + D_2 \cdot [K_{s_2}]  +\cdots + D_n \cdot [K_{s_n}] = 0.
    \]
    Without loss of generality, we may assume that $D_n > 0$. Then we have 
    \[
    \begin{split}
        0 &= D_1 \cdot [K_{s_1}] + D_2 \cdot [K_{s_2}]  +\cdots + D_n \cdot [K_{s_n}] \\
        &< (D_2 - 1)\cdot [K_{s_2}] + D_3 \cdot [K_{s_3}] + \cdots + D_n\cdot [K_{s_n}] \\
        &< (D_3 - 1)\cdot [K_{s_3}] + D_4\cdot [K_{s_4}] + \cdots + D_n\cdot [K_{s_n}] \\
        &< \cdots < (D_n - 1) \cdot  [K_{s_n}] \leq 0,
    \end{split}
    \]
    a contradiction. Therefore the knots $K_{s_n}$ are linearly independent, as desired.
\end{proof}

\begin{rem}
By applying the arguments used in the proof of \cite[Lemma 3.2]{HKPS}, it follows that we can take $N(n)$ to be $2n+2$. With a more explicit calculation of $\iota_K$ for $K_n$, we expect that a much smaller value for $N(n)$ should be possible.\end{rem}

\begin{rem}
    The argument used in the second part of the proof of \Cref{thm:main} can be summarized as follows; note that the same argument was also used in \cite{kang2022torsion}. Let $\{ K_n \}$ be a sequence of rationally slice knots such that for any $i\ge 2$, there exists a positive integer $n_i$ such that $[K_{n_i}] \ge [C_i]$ in $\mathfrak{I}^U_K$. Then $\{ K_n \}$ admits a linearly independent infinite subsequence in $\mathcal{C}$.
\end{rem}

\bibliographystyle{amsalpha}
\bibliography{ref}
\end{document}